\documentclass[12pt]{amsart}
\usepackage{amsfonts}
\usepackage{amssymb}
\usepackage{amsmath}
\usepackage{epsf}
\usepackage{epsfig}
\usepackage{colordvi}
\usepackage{fancybox}
\usepackage[colorlinks=true,urlcolor=blue,linkcolor=blue,pdfborder={0 0 0}]{hyperref}

\newcommand{\R}{\mathbb R}

\newcommand{\E}{\mathbb E}

\newcommand{\const}{\mathrm{const}}

\newtheorem{thm}{Theorem}[section]

\newtheorem{lem}[thm]{Lemma}

\theoremstyle{definition}
\newtheorem{defn}[thm]{Definition}

\theoremstyle{remark}
\newtheorem{remark}{Remark}[section]

\newcommand{\ds}{\displaystyle}
\newcommand{\tr}{\mathrm{tr}}

\numberwithin{equation}{section}

\begin{document}

\title[Canonical parameters on marginally trapped surfaces]
{Canonical parameters on marginally trapped surfaces in the Minkowski 4-space}%

\thanks{2020 {\it Mathematics Subject Classification}: Primary  53A07; Secondary 53B20}
\author{Miroslav Maksimovi\'c and Velichka Milousheva}%
\address{University of Pri\v{s}tina in Kosovska Mitrovica, Faculty of Sciences and Mathematics, Kosovska Mitrovica, Serbia; and 
Institute of Mathematics and Informatics, Bulgarian Academy of Sciences, Acad. G. Bonchev Str. bl. 8, 1113, Sofia, Bulgaria}
\address{Institute of Mathematics and Informatics, Bulgarian Academy of Sciences, Acad. G. Bonchev Str. bl. 8, 1113, Sofia, Bulgaria}
\email{miroslav.maksimovic@pr.ac.rs}
\email{vmil@math.bas.bg}

\keywords{Canonical parameters, marginally trapped surfaces, quasi-minimal surfaces}%

\begin{abstract}

Marginally trapped surfaces are spacelike surfaces in the Minkowski space whose mean curvature vector is lightlike at each point. In general, the 
marginally trapped surfaces are determined by seven functions satisfying  several conditions (differential equations). In the present paper, we introduce special principal parameters, called canonical, and prove that every marginally trapped surface of general type  admits (at least locally) canonical principal parameters which allow us to reduce the number functions. We prove a Fundamental existence and uniqueness theorem formulated in terms of canonical parameters, which states that every marginally trapped surface  is determined up to a motion by three smooth functions satisfying a system of partial differential equations.

\end{abstract}

\maketitle




\section{Introduction}

In the classical differential geometry, the problem of determining a regular curve in the space is solved by introducing the concept of a natural parameter -- the curvature and the torsion of the curve, expressed with respect to the natural parameter, uniquely determine the curve up to a position in the space. However, in the case of surfaces,  the situation is significantly more complicated. The problem of determining a surface with fewer conditions than the conditions in the classical Bonnet’s theorem has been considered by mathematicians for many years. This problem was of interest even to O. Bonnet \cite{Bon}, who considered the possibility of reducing the functions to the coefficients of the first fundamental form and the two principal curvatures. Later, the same problem was studied by \'E. Cartan, who also investigated the problem of determining the surface only by the second fundamental form \cite{Cartan-1}. S. P. Finikoff and B. Gambier \cite{Fin-Gam}, and later R. Bryant \cite{Bry} investigated surfaces in the 3-dimensional Euclidean space from the viewpoint of curvature lines and  principal curvatures. 

The problem of finding a minimal number of functions, satisfying a minimal number of conditions (differential equations), that determine the surface up to a motion in the ambient space, is a fundamental problem in the local theory of surfaces both in Euclidean and pseudo-Euclidean spaces. It is known as the Lund-Regge problem \cite{Lund-Reg} and  is solved for some special classes of surfaces using appropriate parameters on the surface. For example,  special principal parameters were introduced for the so-called Weingarten surfaces in \cite{Gan-Mih1}, and these parameters were used to determine the Weingarten surfaces by three functions, satisfying  one partial differential equation (which is equivalent to the Gauss equation). This result was later improved by O. Kassabov \cite{OK} by introducing canonical parameters for an arbitrary (not necessarily Weingarten) smooth surface and showing that the number of functions involved is reduced to two,  moreover these are the two main invariants -- the Gaussian curvature and the mean curvature.

In the last few years, the problem of determining a surface by a minimal number of functions satisfying a minimal number of partial differential equations was attacked for surfaces of co-dimension two in the 4-dimensional spaces $\R^4$,  $\R_1^4$   and  $\R_2^4$. The Lund-Regge problem was solved for some special classes of surfaces, for example, the minimal surfaces and the surfaces with parallel normalized mean curvature vector field.
By use of special geometric parameters on any minimal surface in  $\R^4$,  R. Tribuzy and I. Guadalupe proved that a minimal surface in  $\R^4$  is determined up to a motion by two invariant functions (the Gaussian curvature and the normal curvature) satisfying a system of two PDEs \cite{Trib-Guad}.  A similar result was proved for spacelike and timelike zero mean curvature surfaces in the Minkowski 4-space $\R^4_1$ in \cite{Al-Pal} and \cite{G-M-IJM}, respectively. These classes of surfaces in  $\R^4_1$  admit (at least locally) special isothermal parameters, called \textit{canonical},  such that the two main invariants -- the Gaussian curvature and the normal curvature of the surface satisfy a system of two partial differential equations. 
The same approach was applied to minimal  Lorentz surfaces in the pseudo-Euclidean 4-space with neutral metric  $\R^4_2$ in \cite{AM-JGP}. In all these cases, the number of the invariant functions determining the surfaces and the number of the differential equations are both reduced to two. Moreover, the geometry of the corresponding zero mean curvature
surface is determined by the solutions of the corresponding system of PDEs.

Another class of surfaces for which the number of functions and the number of differential equations can be reduced, is the class of surfaces with parallel normalized mean curvature vector field -- an important class of surfaces both in Riemannian and pseudo-Riemannian geometry. 
In \cite{G-M-Fil}, the surfaces with parallel normalized mean curvature vector field (PNMCVF) in the Euclidean 4-space $\R^4$ and the spacelike PNMCVF-surfaces in the Minkowski 4-space $\R^4_1$ are described  in terms of three  functions satisfying a system of three PDEs. 
A similar result is proved for the class of  timelike PNMCVF-surfaces in $\R^4_1$, see \cite{B-M}. In all theses cases, the basic  approach to solve the Lund-Regge problem is based on introducing special geometric parameters  called \textit{canonical parameters}. 

The idea to introduce special parameters was further developed by O. Kassabov and the second author in \cite{K-M-2025}, where they defined  canonical principal parameters for the general class of surfaces in the Euclidean 4-space $\R^4$, which generalize the canonical parameters introduced for minimal surfaces and for PNMCVF-surfaces. They proved the Fundamental existence and uniqueness theorem in terms of canonical parameters, which states that any surface in $\R^4$  is determined up to a motion by four geometrically determined functions satisfying a system of natural PDEs. 
The same approach can be applied to the general class of spacelike surfaces in the Minkowski 4-space $\R^4_1$. 

\vskip 1mm
In the present paper, we consider the class of the so-called  marginally trapped surfaces in the four-dimensional Minkowski space $\R^4_1$, which is a  natural extension of the class of minimal surfaces. A surface in a 4-dimensional spacetime is called \textit{marginally trapped} (or \textit{quasi-minimal}) if its mean curvature vector is lightlike at each point of the surface  \cite{Vran-Rosca, Chen-Garay}. 
The concept of trapped surfaces was first  introduced by Roger Penrose \cite{Pen} and plays an important role in General Relativity for studying global properties of spacetime and also for understanding the evolution of cosmic  black holes. Marginally trapped surfaces have been very actively studied in the last few years. For example, marginally trapped surfaces with
positive relative nullity in Lorenz space forms were classified by Chen and Van der Veken in \cite{Chen-Veken-1}. They also classified marginally
trapped surfaces with parallel mean curvature vector field \cite{Chen-Veken-2}. In a series of papers, S. Haesen and M. Ortega studied marginally trapped surfaces in the Minkowski 4-space which are invariant under spacelike rotations, under  boost transformations (hyperbolic rotations), and under the group of screw rotations (a group of Lorenz rotations with an invariant lightlike direction), see \cite{Haesen-Ort-1,Haesen-Ort-2,Haesen-Ort-3}.

In \cite{G-M-JMP-2012}, G. Ganchev and the second author developed an invariant theory
of marginally trapped surfaces in the four-dimensional Minkowski space $\R^4_1$ based on the principal lines
generated by the second fundamental form. They proved a fundamental theorem of
Bonnet-type stating that each marginally trapped surface in $\R^4_1$  is determined up to a motion by seven invariant functions satisfying some natural conditions.

In the present paper, we define special principal parameters, which we call canonical, on the general class of marginally trapped surface in $\R^4_1$,  and prove that each such surface admits (at least locally) canonical principal parameters.  We prove a Fundamental existence and uniqueness theorem formulated in terms of canonical parameters, which states that any marginally trapped surface in $\R^4_1$ is determined up to a motion by three geometrically determined functions satisfying a system of partial differential equations.
In the last section, we give a special family of examples demonstrating our theory.

\section{Preliminaries}

We consider the Minkowski 4-space  $\R^4_1$  endowed with the canonical metric  of signature $(3,1)$, 
given by 
$$g_0 = dx_1^2 + dx_2^2 + dx_3^2 -dx_4^2,$$
where $(x_1; x_2; x_3; x_4)$  is a rectangular coordinate system of $\R^4_1$. As usual, we denote by $\langle , \rangle$ the indefinite inner scalar product with respect to $g_0$. 

A surface $M^2$ in $\R^4_1$ is said to be
\emph{spacelike} if $\langle , \rangle$ induces  a Riemannian
metric $g$ on $M^2$. So, at each point $p$ of a spacelike surface
$M^2$ we have the following decomposition into a tangent and normal space
$$\R^4_1 = T_pM^2 \oplus N_pM^2$$
such  that the restriction of the metric
$\langle , \rangle$  onto the tangent space $T_pM^2$ is of
signature $(2,0)$, and the restriction of the metric $\langle , \rangle$ onto the normal space $N_pM^2$ is of signature $(1,1)$.

Denote by $\nabla'$ and $\nabla$ the Levi Civita connections on $\R^4_1$ and $M^2$, respectively.
For any tangent vector fields   $x$ and $y$ and any normal vector field $\xi$ we have the following formulas of Gauss and Weingarten:
$$\begin{array}{l}
\vspace{2mm}
\nabla'_xy = \nabla_xy + \sigma(x,y);\\
\vspace{2mm}
\nabla'_x \xi = - A_{\xi} x + D_x \xi,
\end{array}$$
where $\sigma$ is the second fundamental tensor, $D$ -- the normal connection 
and  $A_{\xi}$ -- the shape operator with respect to $\xi$.
The mean curvature vector  field $H$ of the surface $M^2$ is defined as
$H = \ds{\frac{1}{2}\,  \tr\, \sigma}$.

A surface $M^2$  is called \textit{minimal} if its mean curvature vector vanishes identically, i.e.
$H = 0$.  A surface $M^2$ in $\R^4_1$ is called \textit{marginally trapped} (or \textit{quasi-minimal}) if its mean curvature vector is lightlike at each point, i.e. $H \neq  0$ and $\langle H, H \rangle = 0$.  

Let $M^2: z = z(u,v), \, \, (u,v) \in {\mathcal D}$ (${\mathcal D} \subset \R^2$) be a local parametrization of a marginally trapped surface in $\R^4_1$ and denote by $E$, $F$, $G$ the coefficients of the 
first fundamental form of $M^2$.
In \cite{G-M-JMP-2012} it is proved that in a neighbourhood of each point $p \in M^2$ there exists a geometrically determined orthonormal frame field $\{x,y,n_1,n_2\}$, such that 
$x = \ds{\frac{z_u}{\sqrt{E}}}$  and $y = \ds{\frac{z_v}{\sqrt{G}}}$ are collinear with the principal directions, $n_1 = H$ and $n_2$ is a normal vector field determined by the conditions
$$\langle n_1, n_1 \rangle = 0; \quad \langle n_2, n_2 \rangle = 0; \quad \langle n_1, n_2 \rangle = -1.$$
With respect to this frame field  the
following Frenet-type derivative formulas  hold true:
\begin{equation} \label{Eq-5}
	\begin{array}{ll}
		\vspace{2mm} \nabla'_xx=\quad \quad \quad \gamma_1\,y+\,(1+\nu)\,n_1;
		& \qquad
		\nabla'_x n_1= \quad\quad \quad \quad  \mu\,y+\beta_1\,n_1;\\
		\vspace{2mm} \nabla'_xy=-\gamma_1\,x\quad \quad \; + \,\quad  \lambda\,n_1
		\; + \mu\,n_2;  & \qquad
		\nabla'_y n_1=\mu \,x \quad\quad \quad \quad +\beta_2\,n_1;\\
		\vspace{2mm} \nabla'_yx=\quad\quad \;\, -\gamma_2\,y  + \quad \lambda\,n_1
		\; +\mu\,n_2;  & \qquad
		\nabla'_xn_2= (1+\nu) \, x + \lambda \,y \quad \quad  -\beta_1\,n_2;\\
		\vspace{2mm} \nabla'_yy=\;\;\gamma_2\,x \quad\quad\;\;\,
		+(1-\nu)\,n_1; & \qquad \nabla'_y n_2= \lambda \, x  + (1-\nu) \,y
		\quad \quad -\beta_2\,n_2,
	\end{array}
\end{equation}
where $\nu, \lambda, \mu, \gamma_1, \gamma_2, \beta_1, \beta_2$ are smooth functions determined by the geometric frame field $\{x,y,n_1,n_2\}$. These functions are called \textit{geometric functions} of the surface and their geometric meaning is described in \cite{G-M-JMP-2012}. Note that $\mu \neq 0$ at each point and the main invariants of the surface -- the Gauss curvature $K$ and the curvature of the normal connection $\varkappa$ are expressed by the functions $\lambda$, $\mu$ and $\nu$ as follows:
\begin{equation} \label{eq:K_kappa}
K=2\lambda\mu; \qquad  \varkappa=-2\mu\nu.
\end{equation} 
The functions $\gamma_1$ and $\gamma_2$  are expressed in terms of the coefficients of the first fundamental form by the formulas
\begin{equation*} 
	\gamma_1=-\frac{(\sqrt E)_v}{\sqrt E\sqrt G},  \qquad \gamma_2=-\frac{(\sqrt G)_u}{\sqrt E\sqrt G},
\end{equation*} 
or equivalently
\begin{equation} \label{eq:gamma1}
	\gamma_1=-\frac{1}{\sqrt G} (\ln \sqrt E)_v,  \qquad \gamma_2=-\frac{1}{\sqrt E} (\ln \sqrt G)_u.
\end{equation}

 In  \cite{G-M-JMP-2012}, it was proved that the seven functions $\nu, \lambda, \mu, \gamma_1, \gamma_2, \beta_1, \beta_2$  satisfy the following conditions
 \begin{equation} \label{eq:BasicSystem}
 	\begin{array}{l}
 		\vspace{2mm} 2\mu\, \gamma_2 + \mu\,\beta_1 =
 		\displaystyle{\frac{\mu_u}{\sqrt{E}}  };\\
 		\vspace{2mm} 2\mu\, \gamma_1+ \mu\,\beta_2 =
 		\displaystyle{\frac{\mu_v}{\sqrt{G}} };\\
 		\vspace{2mm}
 		2\lambda\, \mu =
 		\displaystyle{\frac{(\gamma_2)_u}{\sqrt{E}}
 			+ \frac{(\gamma_1)_v}{\sqrt{G}} - \left((\gamma_1)^2 + (\gamma_2)^2\right)};\\
 		\vspace{2mm} 2\lambda\, \gamma_2 - 2\nu\, \gamma_1 - \lambda\,\beta_1 + (1+ \nu)\,\beta_2 =
 		\displaystyle{\frac{\lambda_u}{\sqrt{E}}  - \frac{\nu_v}{\sqrt{G}}};\\
 		\vspace{2mm} 2\lambda\, \gamma_1 + 2\nu\, \gamma_2 + (1 - \nu)\,\beta_1 - \lambda\,\beta_2 =
 		\displaystyle{\frac{\nu_u}{\sqrt{E}} + \frac{\lambda_v}{\sqrt{G}}};\\
 		\gamma_1\,\beta_1 - \gamma_2\,\beta_2 + 2 \nu\,\mu  =
 		\displaystyle{ - \frac{(\beta_2)_u}{\sqrt{E}} +
 			\frac{(\beta_1)_v}{\sqrt{G}}},
 	\end{array}
 \end{equation}
and determine the marginally trapped surface  up to a motion in $\R^4_1$ (see \cite{G-M-JMP-2012}, Theorem 4.1). This is the Bonnet-type Fundamental theorem for the class of marginally trapped surfaces, which gives the number of functions and the number of differential equations determining the surface   with respect to arbitrary principal parameters.

In the next section, we will introduce special principal parameters, which  allow us to reduce up to three the number of functions  and the  number of PDEs determining the class of marginally trapped surfaces.

\section{Canonical parameters on a marginally trapped surface}

Let $M^2$ be a marginally trapped surface parametrized by arbitrary principal parameters. Then formulas \eqref{Eq-5} hold true.
The first two equations of  (\ref{eq:BasicSystem}) and  (\ref{eq:gamma1}) imply 
\begin{equation*} \label{eq:3.1a}
	\begin{array}{l}
		\vspace{2mm}
		\ds -2\mu\frac{1}{\sqrt{E}}(\ln \sqrt{ G})_u+\mu\beta_1=\frac{\mu_u}{\sqrt E} \ ; \\
		\vspace{2mm}
		\ds	-2\mu\frac{1}{\sqrt{G}}(\ln \sqrt{ E})_v+\mu\beta_2=\frac{\mu_v}{\sqrt G}.
	\end{array}
\end{equation*}
The last equalities imply that the functions $\beta_1$ and $\beta_2$ are expressed in terms of the function $\mu$ and the coefficients of the first fundamental form as follows:
\begin{equation} \label{eq:beta}
	\beta_1=\frac{1}{\sqrt E} (\ln |\mu|)_u + \frac{2}{\sqrt{E}}(\ln \sqrt{ G})_u,  \qquad \beta_2=\frac{1}{\sqrt G} (\ln |\mu|)_v + \frac{2}{\sqrt{G}}(\ln \sqrt{ E})_v.
\end{equation} 

Now, using  \eqref{eq:gamma1} and  \eqref{eq:beta}, from the fourth and fifth equations of \eqref{eq:BasicSystem} we obtain  the following relations:
\begin{equation}\label{eq:mainsystem}
\ds 2(2\nu+1) \frac{(\ln \sqrt{ E})_v}{\sqrt{G}}	-4\lambda\, \frac{(\ln \sqrt{ G})_u}{\sqrt{E}}  =
	\displaystyle{\frac{\lambda_u}{\sqrt{E}}  - \frac{\nu_v}{\sqrt{G}} + \frac{\lambda\, (\ln |\mu|)_u}{\sqrt E}  - \frac{(1+\nu)(\ln |\mu|)_v}{\sqrt G}},
\end{equation}
\begin{equation}\label{eq:mainsystemb}
		\ds	-4\lambda\, \frac{(\ln \sqrt{E})_v}{\sqrt{G}} - 2(2\nu-1) \frac{(\ln \sqrt{G})_u}{\sqrt{E}}  =
		\displaystyle{\frac{\nu_u}{\sqrt{E}} + \frac{\lambda_v}{\sqrt{G}}  - \frac{(1-\nu)(\ln |\mu|)_u}{\sqrt E}  + \frac{\lambda (\ln |\mu|)_v}{\sqrt G}}.
\end{equation}
We consider the above two equations as a system with respect to $(\sqrt E)_v$ and $(\sqrt G)_u$. In the general case, when $4\nu^2+4\lambda^2-1\ne0$, 
equalities  \eqref{eq:mainsystem} and \eqref{eq:mainsystemb} imply the next system of partial differential equations 
\begin{equation*} 
	\begin{array}{l}
		\vspace{2mm}
		\left(\sqrt E\right)_v=\phi_1\sqrt E+\phi_2\sqrt G \ ; \\
		\vspace{2mm}
		\left(\sqrt G\right)_u=\phi_3\sqrt E+\phi_4\sqrt G \ ,
	\end{array}
\end{equation*} 
which can be written also as follows
\begin{equation} \label{eq:3.3-v2}
	\begin{array}{l}
		\vspace{2mm}
		\ds\left(\ln\sqrt E\right)_v=\phi_1+\phi_2\frac{\sqrt G}{\sqrt E} \ ; \\
		\vspace{2mm}
		\ds\left(\ln\sqrt G\right)_u=\phi_3\frac{\sqrt E}{\sqrt G}+\phi_4 \ ,
	\end{array}   
\end{equation} 
where the four functions $\phi_i$, $i=1,2,3,4$, are given by 
\begin{equation} \label{eq:functions}
	\begin{array}{l}
		\vspace{3mm}
		\phi_1 \!= \! \ds{-\frac{\mu(\lambda^2+\nu^2-\nu)_v + (2(\lambda^2+\nu^2)+\nu-1)\mu_v }{2\mu(4\nu^2+4\lambda^2-1) } } \ ; \\
		\vspace{3mm}
		\phi_2 \!=  \! \ds{\frac{2\mu(\lambda_u\nu-\lambda\nu_u) +\lambda\mu_u-\lambda_u\mu }{2\mu(4\nu^2+4\lambda^2-1) } } \ ;\\
		\vspace{3mm}
		\phi_3 \!= \! \ds{\frac{2\mu(\lambda\nu_v -\lambda_v\nu) +\lambda\mu_v-\lambda_v\mu }{2\mu(4\nu^2+4\lambda^2-1) } }\ ; \\
		\vspace{3mm}
		\phi_4 \!= \! \ds{-\frac{\mu(\lambda^2+\nu^2+\nu)_u + (2(\lambda^2+\nu^2)-\nu-1)\mu_u  }{2\mu(4\nu^2+4\lambda^2-1) } } \ .
	\end{array}
\end{equation}

\begin{remark}
Note that from equalities \eqref{eq:K_kappa} it follows that the condition $4\nu^2+4\lambda^2-1 = 0$ is equivalent to the following condition on the main invariants of the surface:
$$K^2+\varkappa^2 =\mu^2.$$
\end{remark}

Further, we shall consider the general case: $K^2+\varkappa^2 \neq \mu^2$ and these surfaces we will call \textit{marginally trapped surfaces of general type}. The special case $K^2+\varkappa^2 =\mu^2$ will be studied separately.

\vskip 2mm
Having in mind system of equations (\ref{eq:3.3-v2}), we can easily prove the following statement.

\begin{lem} \label{lemma:ConstantFunctions} The function $\sqrt E\, e^{-\ds\int_{v_0}^v\left(\phi_1+\phi_2\frac{\sqrt G}{\sqrt E}\right)dv} $ does not depend on the parameter $v$ and the function 
	$\sqrt G\, e^{-\ds\int_{u_0}^u\left(\phi_3\frac{\sqrt E}{\sqrt G}+\phi_4\right)du}$
	does not depend on the parameter $u$.
\end{lem}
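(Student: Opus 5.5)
The proof is a direct differentiation, using the system \eqref{eq:3.3-v2}. For the first function, write
$$f(u,v)=\sqrt E\, \exp\Big(-\int_{v_0}^v\Big(\phi_1+\phi_2\frac{\sqrt G}{\sqrt E}\Big)dv\Big).$$
Here $\sqrt E>0$ on the parameter domain, so we may work with $\ln f$. We have
$$\ln f=\ln\sqrt E-\int_{v_0}^v\Big(\phi_1+\phi_2\frac{\sqrt G}{\sqrt E}\Big)dv.$$
The integrand is a smooth function of $(u,v)$, since $\phi_i$ are smooth in the general case $4\nu^2+4\lambda^2-1\neq 0$ and $\mu\neq0$. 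We take the domain $\mathcal D$ convex in $v$ (e.g. a rectangle containing $(u_0,v_0)$), so the integral is defined. By the fundamental theorem of calculus, the $v$-derivative of the integral is the integrand evaluated at $(u,v)$. Therefore
$$(\ln f)_v=(\ln\sqrt E)_v-\phi_1-\phi_2\frac{\sqrt G}{\sqrt E},$$
which vanishes identically by the first equation of \eqref{eq:3.3-v2}. Hence $f_v=f\,(\ln f)_v=0$. Equivalently, one can compute $f_v$ directly by the product rule and avoid logarithms entirely.

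A function with $f_v\equiv 0$ on a domain whose $v$-sections are intervals depends only on $u$. This gives the first claim.

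The second claim is symmetric. Set
$$h=\sqrt G\,\exp\Big(-\int_{u_0}^u\Big(\phi_3\frac{\sqrt E}{\sqrt G}+\phi_4\Big)du\Big).$$
Differentiating in $u$ and using the second equation of \eqref{eq:3.3-v2} gives $h_u=0$, so $h$ depends only on $v$.

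There is no real obstacle in this argument. The only points to state explicitly are the smoothness and positivity needed for differentiation under the integral: $E,G>0$ holds because the surface is spacelike, and $\phi_i$ are smooth under the general-type assumption. The domain restriction must also be made explicit, namely working locally on a rectangle around $(u_0,v_0)$, so that vanishing of a partial derivative implies independence from that variable.
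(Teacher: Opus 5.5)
Your proof is correct and is the same argument the paper intends: the paper gives no separate proof and presents the lemma as an immediate consequence of system \eqref{eq:3.3-v2}. Your computation makes that explicit: the logarithmic derivative in $v$ (resp.\ $u$) of each function vanishes by the first (resp.\ second) equation of that system. One small wording fix: only the fundamental theorem of calculus is used, not differentiation under the integral sign.
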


From Lemma \ref{lemma:ConstantFunctions} it follows that if $(u_0,v_0)$ is an arbitrary point  and $c_1$, $c_2$ are constants, then 
there exist functions $\varphi(u)$ and $\psi(v)$ which are given by
\begin{equation} \label{eq:phi-psi}
	\begin{array}{l} 
		\vspace{2mm}
		\varphi(u) = \ds\sqrt Ee^{-\ds\int_{v_0}^v\left(\phi_1+\phi_2\frac{\sqrt G}{\sqrt E}\right)dv
			-\int_{u_0}^u\left(\phi_3\frac{\sqrt E}{\sqrt G}+\phi_4\right)(u,v_0)du+c_1}\ ;  \\
		\vspace{2mm}
		\psi(v) = \ds\sqrt Ge^{-\ds\int_{u_0}^u\left(\phi_3\frac{\sqrt E}{\sqrt G}+\phi_4\right)du
			-\int_{v_0}^v\left(\phi_1+\phi_2\frac{\sqrt G}{\sqrt E}\right)(u_0,v)dv+c_2} \ .
	\end{array}   
\end{equation}

We introduce the notion of canonical parameters on a marginally trapped surface by the following definition.

\begin{defn} \label{D:def-can}
Let $M^2$ be a marginally trapped surface in $\R^4_1$ parametrized by principal parameters $(u,v)$ and  $K^2+\varkappa^2 \neq \mu^2$. If the functions $\varphi(u)$ and $\psi(v)$ defined by \eqref{eq:phi-psi} are equal to 1, then we say that the parameters $(u,v)$ are {\it canonical principal parameters} of the surface.
\end{defn}

Now we will prove the following theorem for canonical parameters of marginally trapped surfaces in $\R^4_1$.

\begin{thm} \label{eq:ExistenceCanonicalParameters}
	Each marginally trapped surface with  $K^2+\varkappa^2 \neq \mu^2$ locally admits canonical principal parameters.
\end{thm}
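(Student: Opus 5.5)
Starting from arbitrary principal parameters $(u,v)$ near a point $p$, we will change them by $u=u(\bar u)$, $v=v(\bar v)$, which keeps them principal, and choose the two one-variable functions so that the new $\bar\varphi$ and $\bar\psi$ equal $1$. Principal parameters are determined up to exactly such changes (and a swap of the variables, which we do not need). So the argument reduces to tracking how the objects in \eqref{eq:phi-psi} transform.

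\textbf{Transformation of the data.} Under $u=u(\bar u)$, $v=v(\bar v)$ with $u'(\bar u)>0$, $v'(\bar v)>0$, we have $\sqrt{\bar E}=\sqrt E\,u'(\bar u)$ and $\sqrt{\bar G}=\sqrt G\,v'(\bar v)$. The functions $\nu,\lambda,\mu$ come from the geometric frame, so they are unchanged as functions on the surface. Derivatives transform as $\partial_{\bar u}=u'\partial_u$ and $\partial_{\bar v}=v'\partial_v$. From \eqref{eq:functions}, $\phi_1,\phi_4$ involve only $v$-derivatives (resp. $u$-derivatives) of invariants with the sign factors adjusted accordingly. I will check directly that
\[
\bar\phi_1=v'\phi_1,\quad \bar\phi_2=u'\phi_2,\quad \bar\phi_3=v'\phi_3,\quad \bar\phi_4=u'\phi_4 .
\]
This is consistent with \eqref{eq:3.3-v2}, where $(\ln\sqrt{\bar E})_{\bar v}=v'(\ln\sqrt E)_v$. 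It follows that
\[
\Bigl(\bar\phi_1+\bar\phi_2\tfrac{\sqrt{\bar G}}{\sqrt{\bar E}}\Bigr)d\bar v=\Bigl(\phi_1+\phi_2\tfrac{\sqrt G}{\sqrt E}\Bigr)dv ,
\]
so the integrand form is invariant, and the analogous statement holds for the $u$-integral. Taking base points $(\bar u_0,\bar v_0)$ corresponding to $(u_0,v_0)$ and the same constants $c_1,c_2$, the exponentials in \eqref{eq:phi-psi} are unchanged. We therefore obtain
\[
\bar\varphi(\bar u)=\varphi(u(\bar u))\,u'(\bar u),\qquad \bar\psi(\bar v)=\psi(v(\bar v))\,v'(\bar v).
\]

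\textbf{Choice of the change.} Since $\sqrt E>0$, $\sqrt G>0$ and the exponentials are positive, $\varphi>0$ and $\psi>0$. Lemma \ref{lemma:ConstantFunctions} shows that $\varphi$ depends only on $u$ and $\psi$ only on $v$. We define
\[
\bar u=\int_{u_0}^u\varphi(s)\,ds,\qquad \bar v=\int_{v_0}^v\psi(t)\,dt .
\]
These maps are strictly increasing, hence local diffeomorphisms, and they give $u'(\bar u)=1/\varphi(u)$ and $v'(\bar v)=1/\psi(v)$. Then $\bar\varphi\equiv1$ and $\bar\psi\equiv1$, so $(\bar u,\bar v)$ are canonical principal parameters in a neighbourhood of $p$. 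The condition $K^2+\varkappa^2\neq\mu^2$ holds on a neighbourhood, which ensures the $\phi_i$ are defined there.

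\textbf{Main obstacle.} The step requiring care is verifying the transformation rule for the $\phi_i$ and the resulting invariance of $\varphi(u)\,du$ and $\psi(v)\,dv$. Two points need attention. First, the geometric frame may change sign: $x,y$ are fixed up to orientation of $u,v$, which we avoid by taking increasing reparametrizations. Second, $\mu$ enters only through $\ln|\mu|$ and ratios like $\mu_v/\mu$, so its sign does not matter. I would first verify the rule by substituting into \eqref{eq:3.3-v2}, rather than into \eqref{eq:functions} directly. The left sides scale as stated, and the $\phi_i$ are uniquely determined because the system in $(\sqrt E)_v,(\sqrt G)_u$ is nondegenerate.
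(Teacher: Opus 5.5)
Your proposal is correct and follows the paper's proof: both set $\bar u=\int\varphi$, $\bar v=\int\psi$, use $\sqrt{\bar E}=\sqrt E\,u'$, $\sqrt{\bar G}=\sqrt G\,v'$ together with the scaling $\bar\phi_{1,3}=v'\phi_{1,3}$, $\bar\phi_{2,4}=u'\phi_{2,4}$ to show that the exponents in \eqref{eq:phi-psi} are invariant, and conclude $\bar\varphi=\bar\psi=1$. Your rule $\bar\varphi=(\varphi\circ u)\,u'$ is a cleaner form of the paper's computation \eqref{eq:3.6}. Read the scaling of the $\phi_i$ directly from \eqref{eq:functions}, where each numerator is linear in first derivatives in a single variable. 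Your uniqueness argument via \eqref{eq:3.3-v2} does not work as stated, because that system alone does not fix the split into $\phi_1$ and $\phi_2\sqrt G/\sqrt E$.
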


\begin{proof}
	For a given  pair of principal parameters $(u,v)$ and an arbitrary point $(u_0,v_0)$, we introduce new parameters $\bar u$, $\bar v$ defined by
	\begin{equation*} \label{eq:defCanonParam}
		\begin{array}{l}
			\bar u=\ds\int_{u_0}^u\sqrt Ee^{-\ds\int_{v_0}^v\left(\phi_1+\phi_2\frac{\sqrt G}{\sqrt E}\right)dv
				-\int_{u_0}^u\left(\phi_3\frac{\sqrt E}{\sqrt G}+\phi_4\right)(u,v_0)du+c_1} +u_0 \vspace{2mm} \ ;\\
			\bar v=\ds\int_{v_0}^v\sqrt Ge^{-\ds\int_{u_0}^u\left(\phi_3\frac{\sqrt E}{\sqrt G}+\phi_4\right)du
				-\int_{v_0}^v\left(\phi_1+\phi_2\frac{\sqrt G}{\sqrt E}\right)(u_0,v)dv+c_2} +v_0 \ ,
		\end{array}
	\end{equation*}  
	or equivalently,
	\begin{equation} \label{eq:defCanonParam1}
	\bar u=\int_{u_0}^u \varphi(u) du +u_0 \ ;   \qquad  \bar v=\int_{v_0}^v \psi(v) dv +v_0 ,
		\end{equation}  
	where the functions $\varphi(u)$ and $\psi(v)$ are given by \eqref{eq:phi-psi}. Therefore,  $\bar u=\bar u(u)$ and $\bar v=\bar v(v)$ hold, which means that the new  parameters 
	$(\bar u, \bar v)$ are also principal. Obviously, $\bar u_0=\bar u(u_0)=u_0$, 
	$\bar v_0=\bar v(v_0)=v_0$. Differentiating  equations \eqref{eq:defCanonParam1} gives
	\begin{equation*} 
		\begin{array}{l} 
			\vspace{2mm}
	\bar u_u = \varphi(u) =\sqrt Ee^{-\ds\int_{v_0}^v\left(\phi_1+\phi_2\frac{\sqrt G}{\sqrt E}\right)dv
		-\int_{u_0}^u\left(\phi_3\frac{\sqrt {E}}{\sqrt {G}}+\phi_4\right)(u,v_0)du+c_1} \ ;\\
			\vspace{2mm}
	\bar u_v = 0 \ ;\\
	\vspace{2mm}
	\bar v_u = 0 \ ;\\
	\vspace{2mm}
	\bar v_v = \psi(v) =\sqrt Ge^{-\ds\int_{u_0}^u\left(\phi_3\frac{\sqrt E}{\sqrt G}+\phi_4\right)du
		-\int_{v_0}^v\left(\phi_1+\phi_2\frac{\sqrt G}{\sqrt E}\right)(u_0,v)dv+c_2} \ .
	\end{array}   
	\end{equation*}

	It can easily be checked that  under the change of the parameters $\bar u=\bar u(u)$, $\bar v= \bar v(v) $ we have the relations:
	$$\sqrt {E} = \sqrt {\bar E} \, {\bar u}_u; \quad \sqrt {G} = \sqrt {\bar G} \, {\bar v}_v,$$
	from where it follows that
	\begin{equation*} 
		\begin{array}{ll} 
			\vspace{2mm}
			\phi_1 (u,v) = {\bar \phi_1} (\bar u(u),\bar v(v)) \, {\bar v}_v;  & \quad \phi_2 (u,v) = {\bar \phi_2} (\bar u(u),\bar v(v)) \, {\bar u}_u;  \\
			\vspace{2mm}
			\phi_3 (u,v) = {\bar \phi_3} (\bar u(u),\bar v(v)) \, {\bar v}_v;  & \quad \phi_4 (u,v) = {\bar \phi_4} (\bar u(u),\bar v(v)) \, {\bar u}_u.
		\end{array}   
	\end{equation*}
The last equations imply 
	$$
	\ds (\phi_1+\phi_2\frac{\sqrt {G}}{\sqrt {E}})(u,v)=(\bar \phi_1+\bar \phi_2\frac{\sqrt {\bar G}}{\sqrt {\bar E}})(\bar u(u),\bar v(v))\frac{d\bar v}{dv} \ ;
	$$
	$$
	\ds (\phi_3\frac{\sqrt {E}}{\sqrt {G}}+\phi_4)(u,v)=(\bar \phi_3\frac{\sqrt {\bar E}}{\sqrt {\bar G}}+\bar \phi_4)(\bar u(u),\bar v(v))\frac{d\bar u}{du} \ .
	$$
By calculations we further obtain
	\begin{equation}\label{eq:3.6}
		\begin{split}
			& e^{\ds\int_{v_0}^v\left(\phi_1+\phi_2\frac{\sqrt G}{\sqrt E}\right)(u,v)dv
				+\int_{u_0}^u\left(\phi_3\frac{\sqrt {E}}{\sqrt {G}}+\phi_4\right)(u,v_0)du} = \\
			& =e^{\ds\int_{v_0}^v\left(\bar \phi_1+\bar \phi_2\frac{\sqrt {\bar G}}{\sqrt {\bar E}}\right)(\bar u(u),\bar v(v))\frac{d\bar v}{dv}{dv}
				+\int_{u_0}^u\left(\bar \phi_3\frac{\sqrt {\bar E}}{\sqrt {\bar G}}+\bar \phi_4\right)(\bar u(u),\bar v(v_0))\frac{d\bar u}{du}du} = \\
			& =e^{\ds\int_{\bar v_0}^{\bar v}\left(\bar \phi_1+\bar \phi_2\frac{\sqrt {\bar G}}{\sqrt {\bar E}}\right)(\bar u,\bar v)d\bar v
				+\int_{\bar u_0}^{\bar u}\left(\bar \phi_3\frac{\sqrt {\bar E}}{\sqrt {\bar G}}+\bar \phi_4\right)(\bar u,\bar v_0)d\bar u} \ .
		\end{split}
	\end{equation}
		Using the previous results we have
	$$
	\sqrt{\bar E}=\frac{\sqrt{E}}{\bar u_u}=e^{\ds\int_{v_0}^v\left(\phi_1+\phi_2\frac{\sqrt G}{\sqrt E}\right)dv
		+\int_{u_0}^u\left(\phi_3\frac{\sqrt {E}}{\sqrt {G}}+\phi_4\right)(u,v_0)du-c_1}
	$$
	
	$$
	=e^{\ds\int_{\bar v_0}^{\bar v}\left(\bar \phi_1+\bar \phi_2\frac{\sqrt {\bar G}}{\sqrt {\bar E}}\right)(\bar u,\bar v)d\bar v
		+\int_{\bar u_0}^{\bar u}\left(\bar \phi_3\frac{\sqrt {\bar E}}{\sqrt {\bar G}}+ \bar \phi_4\right)(\bar u,\bar v_0)d\bar u-c_1}
	$$
	and hence $\bar\varphi (\bar u)=1$, where the function $\bar\varphi$ is given by
$$
\ds\bar\varphi (\bar u) = \sqrt{ \bar E} e^{\ds-\int_{\bar{v}_0}^{\bar v}\left(\bar\phi_1+\bar\phi_2\frac{\sqrt{\bar G}}{\sqrt{\bar E}}\right)d\bar v
	-\int_{\bar{u}_0}^{\bar u}\left(\bar\phi_3\frac{\sqrt{\bar E}}{\sqrt{\bar G}}+\bar\phi_4\right)(\bar u,\bar{v}_0)d\bar u+c_1}.
$$

In a  similar way, we also obtain that $\bar\psi(\bar v)=1$, where the function $\bar\psi$ is defined analogously to function $\psi$, but with respect to the parameters $(\bar u,\bar v)$. Thus, according to Definition \ref{D:def-can}, the parameters $(\bar u,\bar v)$ are canonical.
	
\end{proof}

\begin{lem} If $(u,v)$ and $(\bar u,\bar v)$ are two pairs of canonical principal parameters
	in a neighbourhood of a point $p$, then the following relations hold
	$$
	\bar u=\pm u +u_0; \qquad\qquad	\bar v=\pm v+v_0,
	$$
	or
	$$
	\bar u=\pm v+v_0; \qquad\qquad	\bar v=\pm u+u_0 \ .
	$$
\end{lem}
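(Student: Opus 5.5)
First I use the fact that two pairs of principal parameters near $p$ differ only by a change of the form $\bar u=\bar u(u)$, $\bar v=\bar v(v)$ or $\bar u=\bar u(v)$, $\bar v=\bar v(u)$. This holds because at a point of a marginally trapped surface of general type the principal directions are two distinct orthogonal lines: $\mu\neq 0$, so the geometric frame is well defined. The coordinate lines of principal parameters are the principal lines, and a change of parameters must therefore map coordinate lines to coordinate lines, possibly interchanging the two families. Both canonical conditions, and the defining formulas \eqref{eq:functions}, are symmetric under interchanging $u\leftrightarrow v$ together with $x\leftrightarrow y$; in the frame formulas \eqref{Eq-5} this interchange corresponds to $\nu\mapsto-\nu$ and $\gamma_1\leftrightarrow\gamma_2$. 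The second case therefore reduces to the first after relabelling, and I treat only $\bar u=\bar u(u)$, $\bar v=\bar v(v)$.

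In that case I reuse the computation in the proof of Theorem \ref{eq:ExistenceCanonicalParameters}. One has $\sqrt E=\sqrt{\bar E}\,\bar u_u$ and $\sqrt G=\sqrt{\bar G}\,\bar v_v$, and the $\phi_i$ transform as shown there. Consequently the two exponents appearing in \eqref{eq:phi-psi} are invariant, by \eqref{eq:3.6} and its analogue for $\psi$, once the base point is the common point $(u_0,v_0)\mapsto(\bar u_0,\bar v_0)$. Substituting into $\varphi(u)=1$ then gives
$$1=\varphi(u)=\sqrt{\bar E}\,|\bar u_u|\,e^{-(\text{invariant exponent})+c_1}=\bar\varphi(\bar u)\,|\bar u_u|=|\bar u_u|,$$
where the final equality uses that $(\bar u,\bar v)$ is also canonical. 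Likewise $|\bar v_v|=1$. Since $\bar u_u$ and $\bar v_v$ are continuous and nonvanishing, each keeps a constant sign. Integrating gives $\bar u=\pm u+\const$ and $\bar v=\pm v+\const$, and the constants are fixed as $u_0$, $v_0$ in the normalization of the statement.

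The step I expect to be most delicate is the sign bookkeeping. For an orientation reversal $\bar u_u<0$ one has $\sqrt E=\sqrt{\bar E}\,|\bar u_u|$, and the frame vector $x$ changes sign. This may flip the signs of some of $\lambda,\mu,\gamma_i,\beta_i$, and with them the signs of $\phi_i$ and of the integrals in \eqref{eq:phi-psi}. I would check directly from \eqref{eq:functions} that each $\phi_i$ transforms by the factor $\bar v_v$ or $\bar u_u$ with sign included. I would also check that $d\bar u=\bar u_u\,du$ reverses the limits consistently, so that the exponent remains invariant. This is what makes $|\bar u_u|=1$, rather than some other constant, come out exactly.

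A second point to handle is the dependence on the base point $(u_0,v_0)$ and the constants $c_1,c_2$ in the definition. I would fix these as the same data for both parameter pairs, as is implicit in the lemma, so that they cancel in the comparison above.
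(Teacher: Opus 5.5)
Your proposal is correct and is essentially the paper's proof. Like the paper, you split into the cases $\bar u=\bar u(u),\ \bar v=\bar v(v)$ and $\bar u=\bar u(v),\ \bar v=\bar v(u)$, use the invariance \eqref{eq:3.6} of the exponent from the proof of Theorem~\ref{eq:ExistenceCanonicalParameters}, and compare $\varphi$ with $\bar\varphi$ to force the derivative of the parameter change to have modulus one.

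Your relation $\varphi=\bar\varphi\,|\bar u_u|$ is in fact the accurate form of the paper's chain, which writes $(\bar u_u)^2$ even though $\sqrt E=\sqrt{\bar E}\,|\bar u_u|$ produces only one factor. Note also that your relabelling of the swapped case, exactly like the paper's ``in a similar way'', tacitly requires $c_1=c_2$, because interchanging $u$ and $v$ exchanges the roles of $c_1$ and $c_2$ in \eqref{eq:phi-psi}.
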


\begin{proof}
	If the pairs $(u,v)$ and $(\bar u,\bar v)$ are principal parameters, one of the following two cases is possible:
	$$\bar u=\bar u(u), \quad \bar v= \bar v(v);$$
	or  
	$$\bar u=\bar u(v), \quad \bar v= \bar v(u).$$
	For example, let us suppose that $\bar u=\bar u(u)$, $\bar v= \bar v(v) $. Additionally, if $(u,v)$ and $(\bar u,\bar v)$ are canonical parameters, then, using \eqref{eq:3.6}, we have
	\begin{equation*}
		\begin{split}
		1 & =\bar\varphi(\bar u,{\bar v})=\sqrt{\overline E}e^{-\ds\int_{\bar v_0}^{\bar v}\left(\bar \phi_1+\bar \phi_2\frac{\sqrt {\bar G}}{\sqrt {\bar E}}\right)(\bar u,\bar v)d\bar v
			-\int_{\bar u_0}^{\bar u}\left(\bar \phi_3\frac{\sqrt {\bar E}}{\sqrt {\bar G}}+ \bar \phi_4\right)(\bar u,\bar v_0)d\bar u+c_1} \\
		& 	=\frac{\sqrt{E}(u,v)}{(\bar u_u)^2}e^{-\ds\int_{v_0}^v\left(\phi_1+\phi_2\frac{\sqrt G}{\sqrt E}\right)(u,v)dv
			-\int_{u_0}^u\left(\phi_3\frac{\sqrt {E}}{\sqrt {G}}+\phi_4\right)(u,v_0)du+c_1} \\
		& =\frac{\varphi(u)}{(\bar u_u)^2}=\frac{1}{(\bar u_u)^2},
		\end{split}
	\end{equation*}
which imply that $\bar u=\pm u+u_0$. Analogously, we obtain $\bar v=\pm v+v_0$.
	
In a similar way, in the second case $\bar u=\bar u(v), \;\; \bar v= \bar v(u)$, we obtain $\bar u=\pm v+v_0; \;\; \bar v=\pm u+u_0$. 
	
\end{proof}

\section{Fundamental Theorem in terms of canonical parameters}

In this section, we will consider $(u,v)$ to be canonical principal parameters. In accordance with Definition \ref{D:def-can}, we have
$$
\ds\sqrt E=e^{\ds\int_{v_0}^v\left(\phi_1+\phi_2\frac{\sqrt G}{\sqrt E}\right)dv
	+\int_{u_0}^u\left(\phi_3\frac{\sqrt E}{\sqrt G}+\phi_4\right)(u,v_0)du-c_1},
$$
$$
\ds\sqrt G=e^{\ds\int_{u_0}^u\left(\phi_3\frac{\sqrt E}{\sqrt G}+\phi_4\right)du
	+\int_{v_0}^v\left(\phi_1+\phi_2\frac{\sqrt G}{\sqrt E}\right)(u_0,v)dv-c_2},
$$
and hence 
$$
\ds\sqrt E(u,v_0)=e^{\ds\int_{u_0}^u\left(\phi_3\frac{\sqrt E}{\sqrt G}+\phi_4\right)(u,v_0)du-c_1},
$$
$$
\ds\sqrt G(u,v_0)=e^{\ds\int_{u_0}^u\left(\phi_3\frac{\sqrt E}{\sqrt G}+\phi_4\right)(u,v_0)du-c_2}.
$$
By dividing the last two equalities we get the relation
$$
\left(\ds\sqrt {\frac{E}{G}}\right)(u,v_0)=e^{c_2-c_1}=:c= const,
$$
which gives
$$
\ds\sqrt E(u,v_0)=e^{\ds\int_{u_0}^u\left(c\phi_3+\phi_4\right)(u,v_0)du-c_1}.
$$
Analogously, we obtain
$$
\ds\sqrt G(u_0,v)=e^{\ds\int_{v_0}^v\left(\phi_1+\frac 1c\phi_2\right)(u_0,v)dv-c_2}.
$$
For the purposes of our further work, we introduce the following functions
\begin{equation} \label{eq:g1&g2}
	\begin{array}{l}
		\vspace{2mm}
		\ds g_1(u)=e^{\ds\int_{u_0}^u\left(c\phi_3+\phi_4\right)(u,v_0)du-c_1}, \\
		\vspace{2mm}
		\ds g_2(v)=e^{\ds\int_{v_0}^v\left(\phi_1+\frac 1c\phi_2\right)(u_0,v)dv-c_2}.
	\end{array}
\end{equation}

Now, we shall prove the following Bonet-type fundamental theorem for marginally trapped surfaces in terms of canonical principal parameters.

\begin{thm} \label{eq:MainTheorem}
	Let $\nu(u,v)$, $\lambda(u,v)$, and  $\mu(u,v)$, $\mu \neq 0$, $4\nu^2+4\lambda^2-1 \neq 0$, be smooth functions defined in a domain ${\mathcal D} \subset \R^2$ and 
	$\phi_i(u,v)$, $i=1,2,3,4$ be defined by \eqref{eq:functions}. Let $\Phi(u,v)$, $\Psi(u,v)$ be a solution to the Cauchy problem
	\begin{equation*} 
		\begin{array}{l}
			\vspace{2mm}
			\ds\Phi_v=\phi_1\Phi+\phi_2\Psi;  \\ 
			\vspace{2mm}
			\ds\Psi_u   =\phi_3\Phi+\phi_4\Psi; \\
		\end{array}
		\qquad \Phi(u,v_0)=g_1(u); \;\; \Psi(u_0,v)=g_2(v),
	\end{equation*} 
	where $g_1(u)$ and $g_2(v)$ are defined by \eqref{eq:g1&g2}, and let the following equations also hold
	\begin{equation} \label{eq:4.3} 
		\begin{array}{ll}
			\vspace{2mm}
			\ds2\lambda\mu  = & \ds-\frac1{\Phi \Psi}
			\left( \left(\frac{\Phi_v}{\Psi}\right)_v+\left(\frac{\Psi_u}{\Phi}\right)_u \right);   \\
			\vspace{2mm}
			2\nu\mu  = &
			\ds \frac{2}{\Phi\Psi} \left(\ln\frac{|\Psi|}{|\Phi|} \right)_{uv}  .
		\end{array}
	\end{equation}

	Then, there exists a unique (up to a position in $\mathbb R^4_1$) marginally trapped surface of general type parametrized by canonical 
	principal parameters $(u,v)$ with geometric functions $\nu(u,v)$, $\lambda(u,v)$, and $\mu(u,v)$.
\end{thm}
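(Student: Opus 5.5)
The strategy is to reduce to the Bonnet-type fundamental theorem of \cite{G-M-JMP-2012} (Theorem 4.1), which asserts that seven functions $\nu,\lambda,\mu,\gamma_1,\gamma_2,\beta_1,\beta_2$ together with $E,G$ satisfying \eqref{eq:BasicSystem} determine a unique marginally trapped surface up to a motion. Set
$$E=\Phi^2,\qquad G=\Psi^2,\qquad F=0.$$
We may take $\Phi,\Psi>0$, since near $(u_0,v_0)$ they are close to $g_1,g_2>0$. Then define
$$\gamma_1=-\frac{\Phi_v}{\Phi\Psi},\qquad \gamma_2=-\frac{\Psi_u}{\Phi\Psi},$$
as in \eqref{eq:gamma1}, and $\beta_1,\beta_2$ by \eqref{eq:beta}, with $\sqrt E=\Phi$ and $\sqrt G=\Psi$. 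With these definitions we verify the six equations of \eqref{eq:BasicSystem} one at a time.

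The first two equations hold by construction, since \eqref{eq:beta} was obtained by solving exactly them. For the fourth and fifth, the Cauchy problem gives $(\ln\Phi)_v=\phi_1+\phi_2\Psi/\Phi$ and $(\ln\Psi)_u=\phi_3\Phi/\Psi+\phi_4$, which is system \eqref{eq:3.3-v2}. Because $4\nu^2+4\lambda^2-1\neq0$, the linear system \eqref{eq:mainsystem}--\eqref{eq:mainsystemb} in the unknowns $(\ln\sqrt E)_v/\sqrt G$ and $(\ln\sqrt G)_u/\sqrt E$ is uniquely solvable. Its solution is precisely what \eqref{eq:functions} encodes, so \eqref{eq:mainsystem} and \eqref{eq:mainsystemb} hold. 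Substituting \eqref{eq:gamma1} and \eqref{eq:beta} back, these are equivalent to the fourth and fifth equations of \eqref{eq:BasicSystem}.

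The third equation (Gauss) follows by direct computation. With $\gamma_1,\gamma_2$ as above, its right-hand side is the standard curvature expression $-\frac{1}{\Phi\Psi}\bigl((\Phi_v/\Psi)_v+(\Psi_u/\Phi)_u\bigr)$, and this equals $2\lambda\mu$ by the first equation of \eqref{eq:4.3}. For the sixth equation (Ricci), the logarithmic-derivative terms from \eqref{eq:beta} simplify in $-(\beta_2)_u/\Phi+(\beta_1)_v/\Psi$. Writing $\beta_1=\frac{1}{\Phi}\bigl((\ln|\mu|)_u+2(\ln\Psi)_u\bigr)$, and $\beta_2$ similarly, the combination $\frac{1}{\Phi\Psi}\bigl[(\Psi\beta_1)_v-(\Phi\beta_2)_u\bigr]$ equals $\frac{2}{\Phi\Psi}(\ln\Psi-\ln\Phi)_{uv}$, because the $\ln|\mu|$ mixed partials cancel. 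The remaining lower-order terms $\Phi_v\beta_1/(\Phi\Psi)$ and $-\Psi_u\beta_2/(\Phi\Psi)$ cancel exactly against $\gamma_1\beta_1-\gamma_2\beta_2$. Hence the sixth equation reduces to the second equation of \eqref{eq:4.3}. I expect this bookkeeping, and in particular confirming the exact cancellation of the lower-order terms, to be the main computational obstacle.

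Once all six equations hold, Theorem 4.1 of \cite{G-M-JMP-2012} gives a marginally trapped surface, unique up to a motion, with first fundamental form $\Phi^2du^2+\Psi^2dv^2$, principal parameters $(u,v)$ and geometric functions $\nu,\lambda,\mu$. It is of general type because $4\nu^2+4\lambda^2-1\neq0$.

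It remains to check that $(u,v)$ are canonical, i.e.\ that $\varphi\equiv1$ and $\psi\equiv1$ in \eqref{eq:phi-psi}. By Lemma \ref{lemma:ConstantFunctions}, $\varphi$ can be evaluated at $v=v_0$, where $\sqrt E(u,v_0)=\Phi(u,v_0)=g_1(u)$. To make the integrand of \eqref{eq:phi-psi} match the one in \eqref{eq:g1&g2}, we need $\Psi(u,v_0)=\Phi(u,v_0)/c$, equivalently $\Phi/\Psi=c$ along $v=v_0$. This holds at $(u_0,v_0)$ because $g_1(u_0)/g_2(v_0)=e^{c_2-c_1}=c$. Moreover, along $v=v_0$ the function $\Psi$ satisfies the linear ODE $\Psi_u=\phi_3 g_1+\phi_4\Psi$, which is also satisfied by $g_1/c$, since $(\ln g_1)'=c\phi_3+\phi_4$. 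By ODE uniqueness, $\Psi(u,v_0)=g_1(u)/c$. Then $\varphi=g_1\,e^{-\int(c\phi_3+\phi_4)+c_1}=1$, and the symmetric argument gives $\psi=1$.

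Uniqueness among surfaces in canonical parameters follows from the uniqueness part of the cited theorem. The remaining data $E,G,\gamma_i,\beta_i$ are forced by $(\nu,\lambda,\mu)$: \eqref{eq:3.3-v2} together with the canonical normalization determines $\sqrt E,\sqrt G$ as the solution of the stated Cauchy problem, which is unique.
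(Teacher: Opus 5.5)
Your proposal is correct and follows the paper's proof. You set $E=\Phi^2$, $G=\Psi^2$, define $\gamma_1,\gamma_2,\beta_1,\beta_2$ exactly as in \eqref{eq:3.7}--\eqref{eq:3.7-a}, verify the six equations of \eqref{eq:BasicSystem}, and invoke Theorem 4.1 of \cite{G-M-JMP-2012}. Two of your steps improve on the paper: the invertibility argument for the fourth and fifth equations replaces what the paper calls ``long but standard calculations'', and the ODE-uniqueness check along $v=v_0$ and $u=u_0$ proves $\varphi=\psi=1$, i.e.\ that $(u,v)$ are canonical, which the paper never verifies.

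There are two small slips. In the Ricci step the combination should be $\frac{1}{\Phi\Psi}\bigl[(\Phi\beta_1)_v-(\Psi\beta_2)_u\bigr]$, not $(\Psi\beta_1)_v-(\Phi\beta_2)_u$. Also, Theorem 4.1 of \cite{G-M-JMP-2012} as stated requires $\mu_u/(\mu(2\gamma_2+\beta_1))>0$ and $\mu_v/(\mu(2\gamma_1+\beta_2))>0$; the paper checks these, and your first two equations reduce them to $\Phi>0$ and $\Psi>0$.
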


\begin{proof}
	Let $\nu(u,v)$, $\lambda(u,v)$, and  $\mu(u,v)$ be smooth functions, $\phi_i(u,v)$, $i=1,2,3,4$ be given by equations \eqref{eq:functions}, and $g_1(u)$ and $g_2(v)$ be defined by \eqref{eq:g1&g2}.  Let us consider  the following Cauchy problem
	\begin{equation} \label{eq:4.1}
		\begin{array}{l}
			\vspace{2mm}
			\ds\Phi_v=\phi_1\Phi+\phi_2\Psi,  \\ 
			\vspace{2mm}
			\ds\Psi_u   =\phi_3\Phi+\phi_4\Psi, \\
		\end{array}
	\end{equation} 
	\begin{equation} \label{eq:4.1-i}
		\begin{array}{l}
			\vspace{2mm}
			\Phi(u,v_0)=g_1(u), \\
			\vspace{2mm} 
			\Psi(u_0,v)=g_2(v).
		\end{array}
	\end{equation} 
	
	Note that this is an initial value problem for a canonical hyperbolic system of PDEs (see eq. (CHS) in \cite{Top}) and thus system \eqref{eq:4.1} determines (at least locally) functions  $\varphi(u,v)$ and $\psi(u,v)$ 
	satisfying the initial conditions \eqref{eq:4.1-i}.

	Now we  introduce the functions $ E=\Phi^2 $, $ G=\Psi^2 $,  and 
	\begin{equation} \label{eq:3.7}
		\ \gamma_1=-\frac{\Phi_v}{\Phi \Psi};  \qquad \gamma_2=-\frac{\Psi_u}{\Phi \Psi};
	\end{equation} 
	\begin{equation} \label{eq:3.7-a}
		\ds \beta_1= \frac{1}{\Phi \Psi} \left(\Psi (\ln |\mu|)_u + 2\Psi_u \right);  \qquad 
		\ds \beta_2= \frac{1}{\Phi \Psi} \left(\Phi (\ln|\mu|)_v + 2\Phi_v \right).
\end{equation} 

Using the introduced functions we can express  the right-hand side of the first equation of (\ref{eq:4.3}) as follows
\begin{equation*}
	\ds-\frac1{\Phi \Psi}
			\left( \left(\frac{\Phi_v}{\Psi}\right)_v+\left(\frac{\Psi_u}{\Phi}\right)_u \right)
	= \frac{ (\gamma_1)_v}{\sqrt G}+\frac{(\gamma_2)_u}{\sqrt E}-\Big((\gamma_1)^2+(\gamma_2)^2\Big).
\end{equation*}
Therefore, the first equation of (\ref{eq:4.3}) takes the form
$$
2\lambda\mu=\frac{(\gamma_2)_u}{\sqrt E}+\frac{ (\gamma_1)_v}{\sqrt G}-\Big((\gamma_1)^2+(\gamma_2)^2\Big) ,
$$
which coincides with the third equation of system (\ref{eq:BasicSystem}).

Similarly, the right-hand side of the second equation of (\ref{eq:4.3}) can be written as
\begin{equation*}
	\ds \frac{2}{\Phi\Psi} \left( \left(\frac{\Psi_u}{\Psi} \right)_v - \left(\frac{\Phi_v}{\Phi} \right)_u \right)
	= -\gamma_1\beta_1 + \gamma_2\beta_2 - \frac{(\beta_2)_u}{\sqrt E} + \frac{(\beta_1)_v}{\sqrt G},
\end{equation*}
and hence,  the second equation of (\ref{eq:4.3}) takes the form
$$
2\nu\mu  = \frac{(\beta_1)_v}{\sqrt G} - \frac{(\beta_2)_u}{\sqrt E} -\gamma_1\beta_1 + \gamma_2\beta_2, 
$$
which is exactly the last equation of system (\ref{eq:BasicSystem}).

Using equalities (\ref{eq:3.7}) and (\ref{eq:3.7-a}), we can easily calculate that
$$
2\mu \gamma_2 + \mu \beta_1 = -2 \mu \frac{\Psi_u}{\Phi \Psi} +\mu  \frac{1}{\Phi \Psi} \left(\Psi (\ln |\mu|)_u + 2\Psi_u \right) = \frac{\mu_u}{\sqrt{E}}, 
$$
which is the first equation of system (\ref{eq:BasicSystem}),
and 
$$
2\mu \gamma_1 + \mu \beta_2 = -2 \mu \frac{\Phi_v}{\Phi \Psi} +\mu  \frac{1}{\Phi \Psi} \left(\Phi (\ln |\mu|)_v + 2\Phi_v \right) = \frac{\mu_v}{\sqrt{G}}, 
$$
which is the second equation of system (\ref{eq:BasicSystem}).

To check the remaining two equations of system (\ref{eq:BasicSystem}) we use that $\Phi(u,v)$ and $\Psi(u,v)$  satisfy system  (\ref{eq:4.1}). After long but standard calculations, by use of \eqref{eq:functions}, \eqref{eq:4.1}, \eqref{eq:3.7} and \eqref{eq:3.7-a} we get that the fourth and fifth equations of 
\eqref{eq:BasicSystem} are also satisfied. Consequently, all conditions  in system \eqref{eq:BasicSystem} are fulfilled. 
Moreover, using  \eqref{eq:3.7} and \eqref{eq:3.7-a}  we can easily obtain that  the following inequalities are also valid
$$
\frac{\mu_u}{\mu(2\gamma_2+\beta_1)}>0 \ ;  \qquad\qquad
\frac{\mu_v}{\mu(2\gamma_1+\beta_2)}>0 \ .
$$
Finally, the end of the proof follows based on Theorem 4.1 in paper \cite{G-M-JMP-2012}.

\end{proof}

In other words, Theorem \ref{eq:MainTheorem} states that each marginally trapped surface of general type in the Minkowski space $\R^4_1$ is determined up to a motion (a position in the space)  by three smooth functions 
$\nu(u,v)$, $\lambda(u,v)$, and  $\mu(u,v)$ ($\mu \neq 0$), satisfying a system of PDEs. Moreover, the parameters $(u,v)$ are the canonical principal parameters of the marginally trapped surface.

\section{Marginally trapped surfaces with parallel mean curvature vector field}

In this section we will consider marginally trapped surfaces in $\R^4_1$ with  parallel mean curvature vector field, i.e. the condition $DH =0$ holds identically. For such surfaces, the invariants $\beta_1$ and $\beta_2$ vanish, i.e. $\beta_1=\beta_2=0$ (see \cite{G-M-JMP-2012}). So, in the case of a marginally trapped surface with parallel mean curvature vector field, 
from the last equation of system (\ref{eq:BasicSystem}) it follows that $\nu=0$, since $\mu \neq  0$.

Now, the equations in system (\ref{eq:BasicSystem}), omitting the third and last one, take the following simpler forms, respectively,
\begin{equation}\label{eq:5.1mtswithparallelmcv}
	\begin{array}{l}
		\vspace{2mm}
		\ds 2\gamma_2\sqrt{E}= (\ln|\mu|)_u, \\
		\vspace{2mm}
		\ds	2\gamma_1\sqrt{G}=(\ln|\mu|)_v, \\
		\vspace{2mm} 
		\ds 2\gamma_2\sqrt{E}=(\ln|\lambda|)_u, \\
		\vspace{2mm}
		\ds	2\gamma_1\sqrt{G}=(\ln|\lambda|)_v.
	\end{array}
\end{equation}
Using \eqref{eq:5.1mtswithparallelmcv}, we obtain
\begin{equation*}
	(\ln|\mu|)_u = (\ln|\lambda|)_u\;\; \mbox{and} \;\;  (\ln|\mu|)_v = (\ln|\lambda|)_v,
\end{equation*}
which imply 
\begin{equation*}
	\left(\ln{\frac{|\lambda|}{|\mu|}}\right)_u=0 \;\; \mbox{and} \;\; \left(\ln{\frac{|\lambda|}{|\mu|}}\right)_v=0.
\end{equation*}
The last two equations imply that the functions $\lambda(u,v)$  and  $\mu(u,v)$ satisfy the relation
\begin{equation*}
	\ln{\frac{|\lambda|}{|\mu|}}= \const.
\end{equation*}
Hence, the functions $\mu(u,v)$ and $\lambda(u,v)$ are proportional, i.e. 
\begin{equation}\label{eq:mulambda}
	\lambda(u,v) = c\, \mu(u,v),
\end{equation}
where $c = \const$.

If we assume that $c = 0$, then $\lambda(u,v) =0$ and equalities \eqref{eq:K_kappa} imply that both 
the Gauss curvature $K$ and the  normal curvature $\varkappa$ are zero. In  such case, the surface consists only of inflection points, which means that 
it is either developable or lies in a 3-dimensional space (see \cite{Lane}). So, we consider the case $c \neq 0$, which means that $\lambda \neq 0$. 
Since  $4\nu^2+4\lambda^2-1\ne0$, we have $\lambda\ne\pm \frac{1}{2}$.

 Further, using formulas \eqref{eq:functions}, we obtain the following expressions for the functions $\phi_i$, $i=1,2,3,4$
\begin{equation*}
	\phi_1 = -(\ln\sqrt{|\mu|})_v, \;\; \phi_2 =\phi_3 = 0, \;\; \phi_4  =-(\ln\sqrt{|\mu|})_u.
\end{equation*}
If we substitute these expressions in \eqref{eq:phi-psi}, we get that the function $\varphi(u)$ is expressed as
\begin{equation*}
	\begin{split}
		\varphi(u) & =\sqrt E\, e^{\ds-\int_{v_0}^v\left(\phi_1+\phi_2\frac{\sqrt G}{\sqrt E}\right)dv
			-\int_{u_0}^u\left(\phi_3\frac{\sqrt E}{\sqrt G}+\phi_4\right)(u,v_0)du+c_1}  \\ & =
		\sqrt E\, e^{\ds \ln{\sqrt{|\mu(u,v)|}} - \ln{\sqrt{|\mu(u_0,v_0)|}} + c_1}.
\end{split} \end{equation*}
Choosing $c_1=\ln{\sqrt{|\mu(u_0,v_0)|}}$, we  obtain
\begin{equation}\label{E:phi}
 \varphi(u)  =\sqrt E\,\sqrt{|\mu|}.
\end{equation}
Similarly, choosing $c_2=\ln{\sqrt{|\mu(u_0,v_0)|}}$, we obtain that the  function $\psi(v)$ has the following form
\begin{equation}\label{E:psi}
\psi(v)  =\sqrt G\,\sqrt{|\mu|}.
\end{equation}

Hence, applying Definition \ref{D:def-can}  of canonical parameters for marginally trapped surfaces with parallel mean curvature vector field, we obtain that $(u,v)$ are canonical parameters if and only if 
  $$\sqrt E\,\sqrt{|\mu|} = 1; \quad \sqrt G\,\sqrt{|\mu|} = 1,$$
	or equivalently, 
 $$E(u,v)=\frac{1}{|\mu(u,v)|}; \quad G(u,v)=\frac{1}{|\mu(u,v)|}.$$

\begin{remark}
Canonical parameters for spacelike surfaces with parallel normalized mean curvature vector field in the Minkowski 4-space $\R^4_1$ were introduced in \cite{G-M-Fil} as special isothermal parameters that satisfy the conditions 
$$E(u,v)=G(u,v)=\frac{1}{|\mu(u,v)|}; \quad F(u,v)=0.$$
Obviously, for marginally trapped surfaces with parallel mean curvature vector field  we have the same expressions for the coefficients of the first fundamental form.  Hence, in some sense, Definition \ref{D:def-can} of canonical parameters in the present paper is equivalent to the definition given in \cite{G-M-Fil}.
\end{remark}

\begin{remark}
A classification of marginally trapped surfaces with parallel mean curvature vector field is given by B.-Y. Chen and J. Van der Veken in \cite{Chen-Veken-2}. According to their classification, there exist six types of marginally trapped surfaces with parallel mean curvature vector field which are listed in Theorem 4.1. in  \cite{Chen-Veken-2}.
\end{remark}

\begin{remark}
	Given that  the functions $\varphi$ and $\psi$ depend only on $u$ and $v$, respectively, based on equalities \eqref{E:phi} and \eqref{E:psi} we also conclude that, in the case of marginally trapped surfaces with parallel mean curvature vector field, the magnitudes $E|\mu|$ and $G|\mu|$ depend only on $u$ and $v$, respectively.
\end{remark}

\begin{remark}
  Keeping in the mind equality \eqref{eq:mulambda}, we conclude that $\lambda\ne0$, which means that the marginally trapped surfaces with parallel mean curvature vector field cannot be flat (see Proposition 3.1 in \cite{G-M-JMP-2012}).
\end{remark}

\begin{remark}
Since  $\nu=0$ for marginally trapped surfaces with parallel mean curvature vector field, from the equation $\varkappa=-2\mu\nu$, we get $\varkappa=0$, which means that all marginally trapped surfaces with parallel mean curvature vector field are surfaces with flat normal connection (see also Proposition 3.2 in \cite{G-M-JMP-2012}).
\end{remark}

\section{Examples }

In this section, we will show how to find the canonical principal parameters on a special family of marginally trapped surfaces -- the marginally trapped meridian surfaces of parabolic type.

In \cite{GM4}, a family of surfaces lying on a standard rotational hypersurface in the four-dimensional Euclidean space
$\R^4$ was constructed.  
They are called \emph{meridian surfaces} because they are one-parameter systems of meridians of the rotational hypersurface. Using a similar idea,  special families of two-dimensional spacelike surfaces lying
on rotational hypersurfaces in $\R^4_1$ were constructed in papers \cite{G-M-JMP-2012,G-M-IJGMMP-2013}. Depending on the casual character of the axis of the rotational hypersurface, as well as the type of the spheres in
a 3-dimensional Minkowski subspace and the casual character of the spherical curves, we
distinguish different types of spacelike or timelike  meridian surfaces  in $\R^4_1$ -- of elliptic type, hyperbolic type, or parabolic type.

The most interesting rotation is the rotation about a lightlixe axis, so here we will consider two-dimensional surfaces lying on a rotational hypersurface with lightlike axis -- these are  the so-called meridian surfaces of  parabolic type.

\vskip 2mm
For the construction of meridian surfaces of parabolic type it is convenient to use the  pseudo-orthonormal base
 $\{e_1,e_2, \xi_1, \xi_2 \}$  of $\R^4_1$, where $e_1^2 = e_2^2 = e_3^2 = 1$, $e_4^2 = -1$ and 
$ \ds{\xi_1= 	\frac{e_3 + e_4}{\sqrt{2}}}$, $\ds{\xi_2= \frac{ - e_3 + 		e_4}{\sqrt{2}}}$ (see \cite{G-M-IJGMMP-2013}). It is clear that $\langle\xi_1, \xi_1 \rangle =0$, $\langle
\xi_2, \xi_2 \rangle =0$, $\langle \xi_1, \xi_2
\rangle = -1$. A rotational hypersurface with lightlike axis  $\xi_2$ can
be parametrized by
$$\mathcal{M}''': Z(u,w^1,w^2) =  f(u)\, w^1 \cos w^2 \,e_1 +  f(u)\, w^1 \sin w^2 \,e_2+ \left(f(u) \frac{(w^1)^2}{2} + g(u)\right) \xi_1 + f(u) \,\xi_2,$$
where $f = f(u), \,\, g = g(u)$ are smooth functions, defined in an interval $I \subset \R$, such that $- f'(u)g'(u) >0$, $f(u)>0$,
$ u \in I$. Assuming that $w^1 = w^1(v)$, $w^2=w^2(v)$, $v \in J \subset \R$,
and $(\dot{w}^1)^2 + (\dot{w}^2)^2 \neq 0$, we consider the one-parameter system of meridians
of the rotational hypersurface  with lightlike
axis defined by
\begin{equation*} 
	\mathcal{M}'''_m: z(u,v) = Z(u,w^1(v),w^2(v)),
\end{equation*}
where $u \in I$, $v \in J$. This surface is called a \textit{meridian surface 	of parabolic type}. 

Without loss of generality, we may assume that $w^1 = \omega(v)$, $
w^2=v$, so the parametrization of the  surface $\mathcal{M}'''_m$ takes the following form 
\begin{equation*} 
	\mathcal{M}'''_m: z(u,v) = f(u)\, \omega(v) \cos v \,e_1 + f(u)\,
	\omega(v) \sin v \,e_2+ \left(f(u) \frac{(\omega(v))^2}{2} +
	g(u)\right)\xi_1 + f(u) \,\xi_2.
\end{equation*}

In \cite{G-M-IJGMMP-2013}, it is shown that each meridian surface of parabolic type is generated by a plane meridian curve $m$ with curvature  
$\varkappa_m(u) = \ds{ \frac{f' g'' - g' f''}{(-2 f' g')^{\frac{3}{2}}}}$, and a curve $\overline{c}$ with curvature  $\overline{\kappa}(v) = \ds{\frac{\omega \ddot{\omega} - 2 \dot{\omega}^2 - \omega^2 }{(\dot{\omega} ^2 + \omega^2)^{\frac{3}{2}}}}$ lying on the paraboloid  $\mathcal{P}^2$ in $\R^4_1$,
defined by
$$\mathcal{P}^2: z(w^1,w^2) =  w^1 \cos w^2 \,e_1 +   w^1 \sin w^2 \,e_2+ \frac{(w^1)^2}{2} \, \xi_1 + \xi_2.$$
In the case  $f(u)=u$, $g(u)=au+b$, i.e. $\varkappa_m(u) =0$, the meridian curve is a straight line and $\mathcal{M}'''_m$ is a developable ruled  surface in $\R^4_1$ which is a cone \cite{G-M-IJGMMP-2013}. In the case $\varkappa_m(u) \neq 0$, the meridian surface of parabolic type $\mathcal{M}'''_m$
	is marginally trapped if and only if
	$\overline{\kappa}(v) = a = \const, \; a \neq 0$, and the
	meridian curve is defined by
	\begin{equation}  \notag
		\begin{array}{l}
			\vspace{2mm}
			f(u) = u>0;\\
			\vspace{2mm}
			g(u) = \ds{\frac{\pm 1}{2a^3}\left(\frac{a^2 u^2 \mp 2 auc}{c \mp au} -2c \ln |c\mp au| + b \right)},
		\end{array}
	\end{equation}
	where $b$ and $c$ are constants, $c \neq 0$ (Theorem 3.2,  \cite{G-M-IJGMMP-2013}).
	
	\vskip 2mm
	Now, we will show how to find the canonical principal parameters of the marginally trapped meridian surface of parabolic type for the following choice of the constants: $a= -1$, $b=0$,  and $c=1$. In a similar way, one can find the canonical principal parameters for arbitrary constants. 

So, let us consider the functions
\begin{equation*}  
		\begin{array}{l}
			\vspace{2mm}
			f(u) = u;\\
			\vspace{2mm}
			g(u) = \ds{-\frac{1}{2}\left(\frac{u^2 +2 u}{1+u} -2 \ln |1+u| \right)}.
		\end{array}
	\end{equation*}
Then the curvature $\varkappa_m(u)$ has the following form
$$\varkappa_m(u) = -\ds{\frac{1}{u^2}}.$$
Since the curvature $\overline{\kappa}(v) = a = -1$, the function  $\omega(v)$ must satisfy the differential equation:
$$\omega \ddot{\omega} - 2 \dot{\omega}^2 - \omega^2 = - (\dot{\omega} ^2 + \omega^2)^{\frac{3}{2}},$$
whose solution is given by  $\omega(v)=2\cos(v+p)$, $p\in\mathbb{R}$. For simplicity, we choose $p=0$, i.e. $\omega(v)=2\cos v$.
It can easily be calculated that the coefficients of the first fundamental form are given by  
\begin{equation*}
	E = \frac{u^2}{(u+1)^2}; \qquad F =0; \qquad G =4u^2,
\end{equation*}
and the coefficients of the second fundamental form are 
$$L=0; \qquad M=-\frac{2}{u(u+1)}; \qquad N=0. $$

Since $M\ne0$, the parameters $(u,v)$ are not principal. To find principal parametrization of the surface,  we need to find parameters $(\bar u, \bar v)$ for which $\bar F=0$ and $\bar M=0$ (see Proposition 2.1 in \cite{G-M-Med}). Let us consider the following change of the parametrization
	\begin{equation*}
	\begin{split}
		u & = e^{ \bar{u}-\bar{v}} -1,
		\\	
		v & = \frac{ \bar{u} + \bar{v}}{2}.
\end{split}
\end{equation*}
Direct computations show  that the conditions $\bar{F}=\bar{M}=0$ are valid for the new  parameters $(\bar u, \bar v)$, which means that the parameters  are principal. 
With respect to the principal parameters, the coefficients of the first fundamental form are expressed as
\begin{equation*}
	\bar E = 2(e^{\bar{u}-\bar{v}} -1)^2; \qquad \bar F =0; \qquad \bar G = 2(e^{\bar{u}-\bar{v}} -1)^2.
\end{equation*}

According to \cite{G-M-Med}, the mean curvature vector field $H$ of the meridian surface in our case is given by  
$$H = -\frac{1}{2u} (n_1 + n_2),$$
where 
\begin{equation*} 
	\begin{array}{l}
		\vspace{2mm}
		n_1 =  \cos 2v\,e_1 + \sin 2v \,e_2+ 2 \cos^2 v \, \xi_1;\\
		\vspace{2mm}
		n_2 = \ds{\frac{u+1}{u} \left( 2 \cos^2 v \,e_1 + \sin 2v \,e_2 + \left(2 \cos^2 v + \frac{u^2}{2(u+1)^2}\right)\, \xi_1 +
			\xi_2\right)}.
	\end{array}
\end{equation*}
The orthonormal geometric frame field $\{X,Y,N_1,N_2 \}$ in the sense of \cite{G-M-JMP-2012} (where $X$ and $Y$ are  principal directions, $H = N_1$) is given by
\begin{equation*} 
\begin{array}{ll}
		\vspace{2mm}
X =\ds{\frac{z_{\bar u}}{\sqrt{\bar E}}}; & \quad N_1 = \ds{-\frac{1}{2u} (n_1 + n_2)};\\
		\vspace{2mm}
Y = \ds{\frac{z_{\bar v}}{ \sqrt{\bar G}}}; & \quad N_2 = u (n_1 - n_2).
	\end{array}
\end{equation*} 
By direct computations it can be checked that with respect to the geometric frame field $\{X,Y,N_1,N_2 \}$  and the principal parameters $(\bar u, \bar v)$ we have the following expressions
\begin{equation} \notag
	\begin{array}{l}
		\vspace{2mm}
		\sigma(X,X) =  N_1; \\
		\vspace{2mm}
		\sigma(X,Y) = \ds{\frac{e^{ \bar{u}-\bar{v}}}{e^{ \bar{u}-\bar{v}} -1}}\, \,N_1 + \ds{\frac{1}{2 (e^{ \bar{u}-\bar{v}} -1)^3}}\,\, N_2;  \\
		\vspace{2mm}
		\sigma(Y,Y) = N_1,
	\end{array}
\end{equation}
which allow us to derive the invariants $\nu$, $\lambda$, and  $\mu$ in terms of the parameters $(\bar u, \bar v)$
\begin{equation}\notag
	\nu = 0; \qquad \lambda = \ds{\frac{e^{ \bar{u}-\bar{v}} }{e^{ \bar{u}-\bar{v}} -1}}; \qquad \mu =\ds{ \frac{1}{2 (e^{ \bar{u}-\bar{v}} -1)^3}}.
\end{equation}
Taking into account the last equations, from \eqref{eq:functions} we can easily calculate that the functions $\phi_i$, $i=1,2,3,4$, take the following forms
\begin{equation*}
	\phi_1 = \phi_3=0, \qquad \phi_2  =-\frac{ 1}{2(e^{ \bar{u}-\bar{v}} +1)}, \qquad \phi_4   =\frac{e^{ \bar{u}-\bar{v}} +3}{2(e^{ 2(\bar{u}-\bar{v})} -1)}.
\end{equation*}
Hence,
$$
	\ds \phi_1+\phi_2\frac{\sqrt {\bar G}}{\sqrt {\bar E}} = -\frac{ 1}{2(e^{ \bar{u}-\bar{v}} +1)}; \qquad 
	\ds \phi_3\frac{\sqrt {\bar E}}{\sqrt {\bar G}}+\phi_4 = \frac{e^{ \bar{u}-\bar{v}} +3}{2(e^{ 2(\bar{u}-\bar{v})} -1)}, 
	$$
and we can calculate the functions $\varphi(\bar u)$ and $\psi(\bar v)$ given by \eqref{eq:phi-psi}, which for this example  have the following form
\begin{equation}\label{E:Eq-phi-psi-bar}
\begin{array}{l}
		\vspace{2mm}
\varphi(\bar u) = \ds{\sqrt{2} \,(e^{ \bar{u}-\bar{v}} -1) \frac{\sqrt{e^{ \bar{u}-\bar{v}}+1}}{e^{2\bar{u}}-e^{2\bar{v}_0}} \, e^{\frac{1}{2}(3 \bar{u} +\bar{v})}};\\
\vspace{2mm}
\psi(\bar v) = \ds{\sqrt{2}\, \frac{e^{ 2(\bar{u}_0-\bar{v})}-1}{\sqrt{e^{\bar{u}-\bar{v}}+1}} \, e^{\frac{1}{2}(3 \bar{u} +\bar{v})}},
	\end{array}
\end{equation}
where we took $c_1 = \ds -\frac{1}{2}  \ln \left({ ( e^{\bar{u}_0} - e^{\bar{v}_0})^2 (e^{\bar{u}_0} + e^{\bar{v}_0}) }\right) + \frac{3}{2} \bar{u}_0 $,
$c_2=\ds \frac{3}{2} \bar{u}_0  + \frac{1}{2} \ln{ (e^{\bar{u}_0} + e^{\bar{v}_0}) }$, $\bar{u}>\bar{v}_0$, $\bar{u}_0>\bar{v}$, $\bar{u}>\bar{v}$.
Now, having in mind formulas  \eqref{eq:defCanonParam1}, we can find the canonical principal parameters which are determined by the equalities
\begin{equation*} 
	\tilde u=\int_{\bar{u}_0}^{\bar{u}} \varphi(\bar{u}) d\bar{u} +\bar{u}_0 \ ;   \qquad  \tilde v=\int_{\bar{v}_0}^{\bar{v}} \psi(\bar{v}) d\bar{v} +\bar{v}_0,
		\end{equation*}  
where the functions $\varphi(\bar u)$ and $\psi(\bar v)$ are expressed by \eqref{E:Eq-phi-psi-bar}.

Finally, 
 for $(\bar u_0,\bar v_0)=(0, \ln 2)$, 
 we obtain the pair $(\tilde{u}, \tilde{v})$ of canonical principal parameters which is given by the expressions
\begin{equation*}
	\begin{split}
		\tilde{u} & = \frac{1}{2} (1-2e^{-\bar{v}})\sqrt{e^{\bar{v}}+2} \left( \ln \frac{ 2 \sqrt{e^{\bar{v} - \bar{u}}+1} + \sqrt{2(e^{\bar{v}}+2)} }{ | 2 \sqrt{e^{\bar{v} - \bar{u}}+1} - \sqrt{2(e^{\bar{v}}+2)} | } -2 \mathrm{arctanh}\frac{\sqrt{2(e^{\bar{v}}+2)} }{2\sqrt{e^{\bar{v}}+1}} \right) 
		\\ & - \sqrt{2} \left(  \mathrm{arccoth}\sqrt{e^{\bar{v} - \bar{u}}+1} -\mathrm{arccoth}\sqrt{e^{\bar{v} }+1}  + \frac{\sqrt{e^{\bar{v} }+1}}{e^{\bar{v} }} - \sqrt{ e^{\bar{u} - \bar{v}} (e^{\bar{u} - \bar{v}}+1) }  \right) 
		\\ & +  (1+2e^{-\bar{v}})\sqrt{|e^{\bar{v}}-2|} \left( \arctan\sqrt{ \frac{2(e^{\bar{v} - \bar{u}}+1) }{|e^{\bar{v}}-2| } } - \arctan\sqrt{ \frac{2(e^{\bar{v} }+1) }{|e^{\bar{v}}-2| } } \right) 
		\\	\\
		\tilde{v} & = \sqrt{2} \left( \mathrm{arccoth}\sqrt{e^{\bar{v} - \bar{u}}+1} -\mathrm{arccoth}\sqrt{2e^{-\bar{u} }+1}  \right) \\ & + \sqrt{2} \left( \frac{1}{2} \sqrt{e^{\bar{u}} (e^{\bar{u}}+2)  } (4e^{\bar{u}} + 1) - \sqrt{e^{\bar{u}-\bar{v}} (e^{\bar{u}-\bar{v}}+1)  } (2e^{\bar{u}+\bar{v}} + 1) \right) + \ln 2. 
\end{split}\end{equation*}

\vskip 6mm 
\textbf{Acknowledgments:}
The first author is partially supported by the Bulgarian Ministry of Education and  Science, Scientific Programme ''Enhancing the Research Capacity in Mathematical Sciences (PIKOM)''  (contract DO1-67/05.05.2022) and by the Ministry of Education, Science and Technological Development of the Republic of Serbia (contract reg. no. 451-03-34/2026-03/200123).
The second author is partially supported by the National Science Fund, Ministry of Education and Science of Bulgaria, contract KP-06-N82/6.

\end{document}